\title{Fast Digital Convolutions using Bit-Shifts}
\author{Shekhar S. Chandra\thanks{Shekhar S. Chandra is with the School of Physics, Monash University, Australia. Email:~\href{mailto:Shekhar.Chandra@monash.edu}{Shekhar.Chandra@monash.edu}}}
\date{\today}
\def\imod#1{\allowbreak\mkern10mu({\operator@font mod}\,#1)}
\newtheorem{theorem}{Proposition}
\newcommand{\primeSymbol}{p}
\newcommand{\modulusSymbol}{m}
\newcommand{\FermatSymbol}{F}
\newcommand{\firstIndex}{j}
\newcommand{\secondIndex}{k}
\newcommand{\eqnTag}{Eq.}
\newcommand{\eqnsTag}{Eqs}
\newcommand{\figTag}{Fig.}
\newcommand{\figsTag}{Figs}
\newcommand{\tabTag}{Table}
\newcommand{\secTag}{Sec.}
\newcommand{\propTag}{Prop.}
\begin{document}

\maketitle
\acresetall

\begin{abstract}
An exact, one-to-one transform is presented that not only allows digital circular convolutions, but is free from multiplications and quantisation errors for transform lengths of arbitrary powers of two. The transform is analogous to the \acl{DFT}, with the canonical harmonics replaced by a set of cyclic integers computed using only bit-shifts and additions modulo a prime number. The prime number may be selected to occupy contemporary word sizes or to be very large for cryptographic or data hiding applications. The transform is an extension of the \aclp{RT} via Carmichael's Theorem. These properties allow for exact convolutions that are impervious to numerical overflow and to utilise \acl{FFT} algorithms.

\end{abstract}

\begin{IEEEkeywords}
DSP-FAST; Number Theoretic Transform; Discrete Fourier Transform; Fast Fourier Transform; Fermat Number Transform.
\end{IEEEkeywords}

\acresetall 
\section{Introduction}\label{sec::Intro}
The \ac{DFT} is commonly used to compute the circular convolution $h$ of two finite (or periodic) sequences $f$ and $g$ of length $N$ as
\begin{equation}\label{eqn::CCP}
 h(\firstIndex) = f(\firstIndex) * g(\firstIndex) = \sum^{N-1}_{\secondIndex=0} f(\secondIndex) \cdot g(\firstIndex-\secondIndex),
\end{equation}
by using the Convolution Theorem, where \eqnTag~\eqref{eqn::CCP} can be computed simply as a product of both sequences in Discrete Fourier space. This theorem provides a computational advantage because the Cooley-Tukey algorithm~\citep{Cooley1965} for computing the \ac{DFT} has a computational complexity of $O(N\log_2 N)$, as opposed to $O(N^2)$ for direct methods, when $N$ is a power of two.

A major result of this letter regarding convolutions can be summarised as follows. Let $\langle a \rangle_\modulusSymbol$ denote computing the remainder with respect to $\modulusSymbol$ (see Appendix~\ref{sec::Sino} for details), where $a \in \mathbb{Z}$, i.e. $a$ is an integer, and $\modulusSymbol$ is a prime number (or prime) as given in, but not restricted to, \tabTag~\ref{tab::Primes}. To compute the digital circular convolution of two finite integer sequences, one transforms both sequences as
\begin{equation}\label{eqn::NTT}
 X(u) = \sum_{t=0}^{N-1} \left\langle x(t)\cdot 2^{ut} \right\rangle_\modulusSymbol,
\end{equation}
which only involves bit-shifting, modulo and addition operations. The coefficients of these two sequences are multiplied and the result is inverted as
\begin{equation}\label{eqn::iNTT}
 x(t) = \frac{1}{N} \sum_{u=0}^{N-1} \left\langle X(u)\cdot 2^{-ut} \right\rangle_\modulusSymbol,
\end{equation}
where $2^{-ut} = 2^{\langle -ut \rangle_N} = 2^{N-ut}$. Note that the convolution is free from round-off errors as no floating-point numbers are required. Exact digital filtering involving division operations can be performed via multiplicative inverses, i.e. an integer $N^{-1}$ so that $\langle N \cdot N^{-1} \rangle_\modulusSymbol = 1$. The Cooley-Tukey algorithm~\citep{Cooley1965} is easily applied by replacing $e^{2\pi i\alpha/N}$ with the powers of two $2^\alpha$, where $\alpha\in \mathbb{Z}$ and $i^2=-1$. In other words, the $N^{\text{th}}$ root of unity $e^{2\pi i N} = 1$ is replaced with the integer-only equivalent $\langle 2^N \rangle_\modulusSymbol = 1$. The transform lengths permitted are $N=2^n$ when $n\in \mathbb{Z}$ that divide $N_{\text{Max}}$ in \tabTag~\ref{tab::Primes}. For example, the prime 13631489 allows for all $N$ up to and including $2^{19}$.
\begin{figure}[htbp]
 \centering
 \subfloat[]{
 \includegraphics[width=0.15\textwidth]{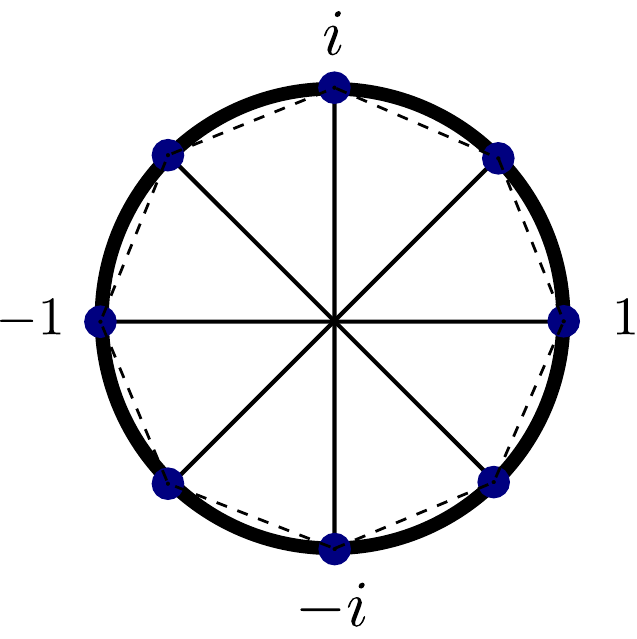}} 
 \hspace{0.75cm}
 \subfloat[]{
 \includegraphics[width=0.15\textwidth]{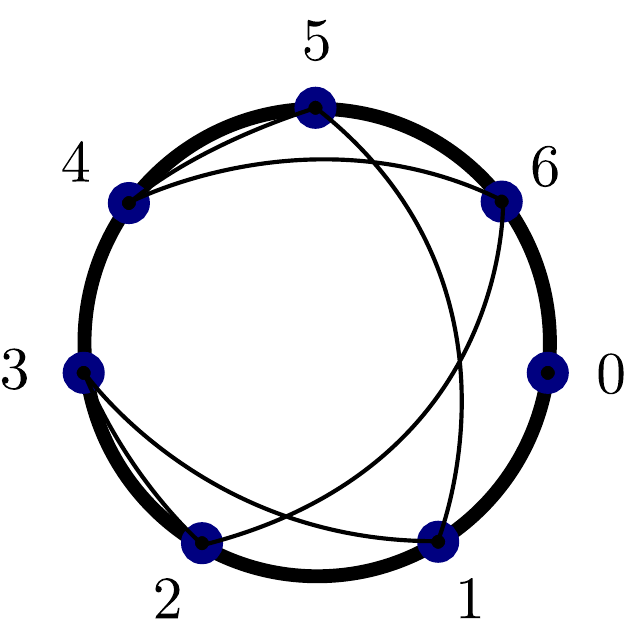}}
 
 \subfloat[]{
 \includegraphics[width=0.15\textwidth]{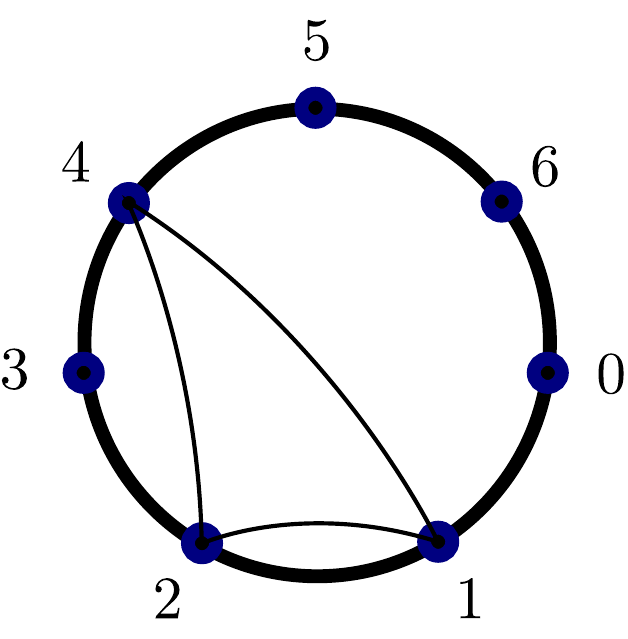}}
 \hspace{0.75cm}
 \subfloat[]{
 \includegraphics[width=0.15\textwidth]{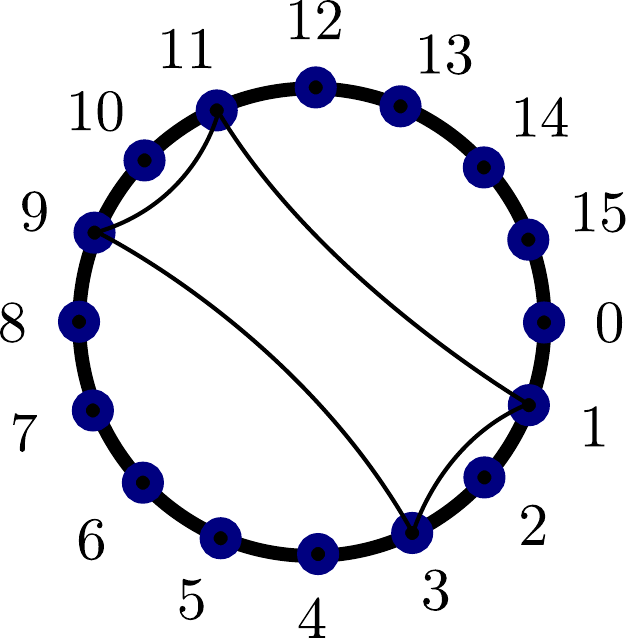}}
 \caption{The unit circle (a) and the digital circles for computing exact convolutions (b)-(d). The paths in (b)-(d) show the integers produced from successive powers of 3 modulo 7 (resulting in $\{1,3,2,6,4,5,1,\ldots\}$) in (b), successive powers of 2 modulo 7 (resulting in $\{1,2,4,1,2,4,1,\ldots\}$) in (c) and successive powers of 3 modulo 16 (resulting in $\{1,3,9,11,1,\ldots\}$) in (d). The transform lengths permissible for each are $N_{\text{Max}}=6$ (and its divisors) for (b), $N_{\text{Max}}=3$ for (c) and $N_{\text{Max}}=4$ for (d).}
 \label{fig::Circles}
\end{figure}
\begin{table}[htbp]
	\centering 
	\begin{tabular}{c c c c} 
	\hline\hline 
	\\[-1.5ex]
	$m$ & Max. Transform & Corresponding & Word \\[-1ex]
	\raisebox{1.75ex}{Prime} & Length $N_{\text{Max}}$ & Fermat Number & Size  \\[0.5ex]
	\hline 
	\\[-1.5ex]
	641 & 64 & $\FermatSymbol_5$ & 16-bit\\[0.5ex]
	2424833 & 1024 & $\FermatSymbol_9$ & 32-bit\\[0.5ex]
	319489 & 4096 & $\FermatSymbol_{11}$ & 32-bit\\[0.5ex]
	13631489 & 524288 & $\FermatSymbol_{18}$ & 32-bit\\[0.5ex]
	\hline 
	\hline 
	\end{tabular}
	\caption{Prime values of the modulus $\modulusSymbol$ for different transform lengths $N$. Lengths permissible are $2^n \leqslant N_{\text{Max}}$. Note that the size of the prime is not necessarily proportional to $N_{\text{Max}}$.\label{tab::Primes}}
\end{table}

Equations~\eqref{eqn::NTT} and \eqref{eqn::iNTT} are an extension of the \acp{RT}~\citep{Agarwal1974}, which, until now, were only practical for small ($N \leqslant 128$) transform lengths, severely limiting its applications~\citep{Agarwal1974a}. The theory developed in \secTag~\ref{sec::Carmichael} and~\ref{sec::Transforms} of this letter applies Carmichael's Theorem to remove these limitations completely by generalising the concept of an integer based $N^{\text{th}}$ root of unity. \tabTag~\ref{tab::Primes} shows that the moduli $\modulusSymbol$ chosen for very large transform lengths $N$ can fit into a 32-bit word size because the primes are shown to be any (including the smallest) one of the factors of large Fermat numbers, which are numbers of the form
\begin{equation}\label{eqn::FermatNumbers}
 F_n = 2^{2^n} + 1.
\end{equation}
\secTag~\ref{sec::Transforms} also presents a modulus free transform similar to \eqnsTag~\eqref{eqn::NTT} and \eqref{eqn::iNTT}, i.e. an integer-only transform without the need for modulo operations, using the same theory.

The preservation of the \ac{CCP}, which allows one to use the Convolution Theorem for finite sequences, is made possible because the unit circle of the \ac{DFT} (see \figTag~\ref{fig::Circles}(a)) is replaced with the digital ``circle''
\begin{eqnarray}\label{eqn::ChineseHypothesis}
2^{N} &\equiv& 1 \imod \modulusSymbol,
\end{eqnarray}
so that $2^{N} - 1$ is a multiple of $\modulusSymbol$ (see \figsTag~\ref{fig::Circles}(c) and~\ref{fig::Congruence}). The successive powers $\{1,\ldots,N\}$ of two generates a unique set of integers in some order modulo $\modulusSymbol$. The result is a circle consisting of a set of cyclic integers, i.e. a set of unique integers with a period. These integers define the ``harmonics'' of the transform in \eqnsTag~\eqref{eqn::NTT} and \eqref{eqn::iNTT}. Integer coefficients allow computations to be done without round-off error or numerical overflow, since the results are congruent modulo $\modulusSymbol$~\citep{Nussbaumer1978}.

The new transform is an extension of the \acp{RT}~\citep{Agarwal1974}, which consist of the \ac{FNT}~\citep{Schonhage1971} and the \ac{MNT}~\citep{Rader1972}. When using bit-shifts, the \ac{FNT} and the \ac{MNT} only utilise moduli 64-bits or less in size for small ($N \leqslant 128$) transform lengths. The \ac{FNT} only allows larger transform lengths when not in bit-shift form, i.e. the sequence is multiplied by powers of integers other than two (such as three) in \eqnsTag~\eqref{eqn::NTT} and~\eqref{eqn::iNTT}. A solution is to use multi-dimensional techniques which provide a limited extension of the transform lengths~\citep{Agarwal1974a}. Pollard~\citep{Pollard1971} showed that these transforms may have an alternate form (that also does not utilise bit-shifting) via Euler's Theorem
\begin{equation}\label{eqn::Euler}
a^{N} \equiv a^{\phi(\modulusSymbol)} \equiv 1 \imod \modulusSymbol,
\end{equation}
where $a, \modulusSymbol \in \mathbb{Z}$, $a$ and $\modulusSymbol$ are coprime and $\phi(\modulusSymbol)$ is provided in Appendix~\ref{sec::Totient}. When $\modulusSymbol$ is prime, \eqnTag~\eqref{eqn::Euler} becomes $a^{\modulusSymbol-1} \equiv 1 \imod \modulusSymbol$, which is known as Fermat's (little) Theorem. These alternate transforms also preserve the \ac{CCP} while allowing arbitrary transform lengths and are referred to as \acp{NTT}. The modulus $\modulusSymbol$ for these transforms are primes of the form $\primeSymbol = k\cdot N + 1$ so that $\modulusSymbol-1 = k\cdot N$, allowing transform lengths of $N = 2^n$ and divisors of $N$~\citep{Bhattacharya2000}. On the rare occasions when computations explicitly require large exponents in \eqnsTag~\eqref{eqn::ChineseHypothesis} and~\eqref{eqn::Euler}, they can still be computed in logarithmic complexity using modular exponentiation methods.

The \acp{NTT} are ideal for real data but can also be complex-valued~\citep{Nussbaumer1976}. \acp{NTT} have been applied to fast multiplication of very large integers~\citep{Schonhage1971}, fast digital convolutions and filtering~\citep{Agarwal1974,Agarwal1974a}, encryption~\citep{Mohan1985} and discrete Radon transforms~\citep{Chandra2010c}. Agarwal and Burrus~\citep{Agarwal1974} showed the \ac{NTT} to be faster than the \ac{FFT} in their implementation. Chandra~\citep{Chandra2010c} (via the open-source library~\citep{Chandra2009b}) showed that a modern implementation of the \ac{NTT} outperforms the popular FFTW library.

\section{Carmichael's Theorem}\label{sec::Carmichael}
This section presents a new and more general theory of \acfp{NTT} utilising the concept of primitive roots from Carmichael's Theorem~\citep{Carmichael1910} (see Appendix~\ref{sec::Totient}), a generalisation of Euler's Theorem given in \eqnTag~\eqref{eqn::Euler}. The primary result of this new theory are \eqnsTag~\eqref{eqn::NTT} and~\eqref{eqn::iNTT} when using \tabTag~\ref{tab::Primes}.

To construct an \ac{NTT}, one needs a set of unique cyclic integers sufficient to represent all the coefficients of a given transform length $N$. In Euler's Theorem, the integer $a$ is a special integer called a primitive root (or $\phi$-root~\citep{Carmichael1910}) related to the modulus, where successive powers $\{1,\ldots,\modulusSymbol-1\}$ of $a$ generates all the integers $\{1,\ldots,\modulusSymbol-1\}$ in some unique order modulo $\modulusSymbol$ (see \figTag~\ref{fig::Circles}(b)). This condition works well, but is very restrictive as not all integers are $\phi$-roots of a given modulus and not all moduli have $\phi$-roots. For example, the integer 2 is only a $\phi$-root for primes of the form $4\cdot q + 1$ when $q$ itself is prime~\citep[pg. 102]{Beiler1966}. Thus, the integer 2 is only suitable for prime length \acp{NTT} and not a $\phi$-root of primes of the form $k\cdot 2^n + 1$ required for power of two transform lengths in this theory.

Carmichael~\citep{Carmichael1910} developed the concept of the primitive $\lambda$-root, where the successive powers $\{1,\ldots,\nu\}$ of this root generates a fixed subset of the integers $\{1,\ldots,\modulusSymbol-1\}$ in some unique order modulo $\modulusSymbol$ (see \figTag~\ref{fig::Circles}(c)). The number of integers in this subset is $\nu$, where $\nu$ is the smallest integer for which
\begin{equation}\label{eqn::LambdaRoot}
 a^{\nu} \equiv 1 \imod \modulusSymbol,
\end{equation}
is true. Thus, one gets a set of unique cyclic integers of order $\nu$ capable of representing $\nu$ distinct coefficients. Carmichael~\citep{Carmichael1910} points out the smallest composite (non-prime) modulus $\modulusSymbol$ for when this and \eqnTag~\eqref{eqn::ChineseHypothesis} is true is $\modulusSymbol=341$. Since $341= 11\cdot 31$ and $2^{10}-1 = 3 \cdot 11 \cdot 31$, then $\nu = 10$ as $ 2^{10} \equiv 2^{340} \equiv 1 \imod {341}$. Such composite moduli are now known as Poulet numbers. To construct a unique and sufficient set of coefficients for power of two transform lengths, one needs to show that $\nu = 2^n$ and find a modulus $\modulusSymbol$ so that 2 is a $\lambda$-root of $\modulusSymbol$.

\section{New Transforms}\label{sec::Transforms}
This section presents the proof of the transform stated in \eqnsTag~\eqref{eqn::NTT} and~\eqref{eqn::iNTT}. It will be shown that the primes in \tabTag~\ref{tab::Primes} for this transform can be chosen to be any (including the smallest) prime factor of the Fermat numbers~\eqref{eqn::FermatNumbers}. The section will conclude with a discussion of another useful result.

\subsection{Multiplication Free}
In order to satisfy \eqnTag~\eqref{eqn::ChineseHypothesis}, a prime modulus must be selected so that
\begin{equation}
 \modulusSymbol \mid ( 2^{2^n} - 1 ),
\end{equation}
i.e. $\modulusSymbol$ is a prime factor of $2^{2^n} - 1$, when the transform length $N$ is a power of two. The modulus is chosen to be prime so that one may divide the coefficients by any integer, allowing the construction of arbitrary exact filters. The value $2^{2^n} - 1$ must be the smallest multiple of $\modulusSymbol$ to the base 2 so that 2 is a $\lambda$-root of $\modulusSymbol$ as given by~\eqref{eqn::LambdaRoot} and \secTag~\ref{sec::Carmichael}.

The numbers of the form $2^{2^n} - 1$, which will be denoted as the Rader numbers, are a specific form of the Mersenne numbers
\begin{equation}
 M_n = 2^{n} - 1.
\end{equation}
Mersenne numbers can always be expressed as
\begin{equation}\label{eqn::Binomial}
2^{ab} - 1 = \left(2^{a} - 1\right)\left(1 + 2^a + 2^{2a} + \ldots + 2^{(b-1)a}\right),
\end{equation}
when $n$ is composite, since they are binomial numbers~\citep[pg. 42]{Schumer2004}. Applying this expansion to the Rader numbers 
\begin{align*}
&n = 1:  & 2^2-1 \\
&n = 2:  & 2^4-1 &= (2^2-1)\cdot (1 +2^2)\\
&n = 3:  & 2^8-1 &= (2^2-1)\cdot (1 +2^2 + 2^4 + 2^8)\\
&              &       &= (2^2-1)\cdot (1+2^2) \cdot (1+2^4)\\
&n = 4:  & 2^{16}-1 &= (2^2-1)\cdot (1 + 2^2 + 2^4 + 2^6 + \ldots + 2^{14})\\
&              &        &= (2^2-1)\cdot (1 +2^2 + 2^4 + 2^8) \cdot (1+2^8)\\
&              &        &= (2^2-1)\cdot (1+2^2) \cdot (1+2^4) \cdot (1+2^8),
\end{align*}
and so on until one arrives at an identity of the Fermat numbers~\citep[pg. 26]{Krizek2001}
\begin{equation}\label{eqn::Identity}
 2^{2^n} - 1 = F_0\cdot F_1 \cdot F_2 \cdot\ldots\cdot F_{n-1} = F_{n} - 2.
\end{equation}
For example, the first several factorisations of $2^{n} - 1$ are
\begin{align}
 2^{2} - 1 &= 3, & 2^{3} - 1 &= 7,\nonumber\\
 2^{4} - 1 &= 3\cdot 5, & 2^{5} - 1 &= 31,\nonumber\\
 2^{6} - 1 &= 3^2\cdot 7, & 2^{7} - 1 &= 127,\nonumber\\
 2^{8} - 1 &= 3\cdot 5\cdot 17, & 2^{9} - 1 &= 7\cdot 73,\nonumber\\
 2^{10} - 1 &= 3\cdot 11\cdot 31, & 2^{11} - 1 &= 23\cdot 89.\nonumber
\end{align}
\eqnTag~\eqref{eqn::Identity} suggests that $\modulusSymbol$ should be a Fermat number, noting that only the first five Fermat numbers are known to be prime.
\begin{theorem}[Fermat Number Moduli]\label{thm::FermatModuli}
The smallest power of two $\nu$ in \eqnTag~\eqref{eqn::LambdaRoot} for which
\begin{equation}\label{eqn::Rader}
 2^{2^n} \equiv 1 \imod \modulusSymbol,
\end{equation}
is true, is when the modulus $\modulusSymbol$ is the Fermat number $F_{n-1}$.
\end{theorem}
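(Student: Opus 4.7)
The plan is to prove both directions: that $\nu = 2^n$ works when $\modulusSymbol = F_{n-1}$, and that no smaller power of two works. Since the relevant $\nu$'s are powers of two (divisors of $2^n$), the strategy is to combine the identity~\eqref{eqn::Identity} with the classical fact that Fermat numbers are pairwise coprime.

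First I would establish that $2^{2^n} \equiv 1 \imod{F_{n-1}}$. The cleanest route is the identity~\eqref{eqn::Identity}: since $2^{2^n} - 1 = F_0 F_1 \cdots F_{n-1}$, the factor $F_{n-1}$ divides the left-hand side, so $\nu = 2^n$ satisfies~\eqref{eqn::Rader} with $\modulusSymbol = F_{n-1}$. (Alternatively, one may observe directly that $2^{2^{n-1}} \equiv -1 \imod{F_{n-1}}$ by the very definition~\eqref{eqn::FermatNumbers}, and then square.)

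Next I would argue minimality. Any $\nu$ of the form $2^k$ satisfying~\eqref{eqn::Rader} must have $k \leqslant n$ by the previous step, so it suffices to exclude $k < n$. Assume for contradiction that $2^{2^k} \equiv 1 \imod{F_{n-1}}$ for some $k < n$. Then $F_{n-1}$ divides $2^{2^k} - 1$, and applying~\eqref{eqn::Identity} again gives $F_{n-1} \mid F_0 F_1 \cdots F_{k-1}$. This contradicts the pairwise coprimality of the Fermat numbers (the standard proof being $F_n - 2 = F_0 F_1 \cdots F_{n-1}$, so any common factor of two distinct $F_i$ would have to divide $2$, yet all $F_i$ are odd). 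Thus $k = n$ is forced, and $\nu = 2^n$ is the smallest power of two satisfying~\eqref{eqn::Rader}.

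The main obstacle is really only a matter of presentation, namely justifying the restriction to powers of two: since $\nu$ is required to be a power of two in~\eqref{eqn::LambdaRoot} for this transform family, the divisor structure of $2^n$ automatically reduces the candidate set to $\{1, 2, 4, \ldots, 2^n\}$, so one need only rule out $k < n$. I would also briefly remark that the argument does not require $F_{n-1}$ itself to be prime; by taking $\modulusSymbol$ to be any prime factor of $F_{n-1}$ instead, the same reasoning still yields order exactly $2^n$, because that prime cannot simultaneously divide any smaller $F_i$. This observation is precisely what justifies the entries of \tabTag~\ref{tab::Primes} and sets up the subsequent discussion.
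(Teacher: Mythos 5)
Your proposal is correct, but the minimality step takes a different route from the paper's own proof of \propTag~\ref{thm::FermatModuli}. Both arguments start from the identity \eqnTag~\eqref{eqn::Identity} to get $2^{2^n} \equiv 1 \imod{F_{n-1}}$. For minimality, however, the paper argues structurally: using the binomial factorisation \eqnTag~\eqref{eqn::Binomial} it restricts the exponent $\ell$ of any smaller Mersenne multiple $2^\ell-1$ to divisors of $2^n$, and then asserts via \eqnTag~\eqref{eqn::Identity} that higher-order Fermat numbers only enter the factorisation as $n$ increases; it does not invoke coprimality at this stage. You instead rule out $k<n$ by deducing $F_{n-1} \mid F_0 F_1 \cdots F_{k-1}$ and contradicting the pairwise coprimality of the Fermat numbers --- which is precisely the key lemma the paper defers to its proof of \propTag~\ref{thm::FactorModuli}. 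The trade-off: your version is tighter (the paper's phrase ``can only become a factor as $n$ increases'' is exactly the gap that coprimality, or some substitute, must fill), and, as your closing remark observes, it proves the stronger factor-moduli statement essentially for free, so it subsumes \propTag~\ref{thm::FactorModuli}. On the other hand, for the full Fermat number an even more elementary argument suffices and is unavailable in the factor case: since $2^{2^k}-1 \leqslant 2^{2^{n-1}}-1 < F_{n-1}$ for $k<n$, the modulus simply exceeds every smaller Rader number and so cannot divide it; the coprimality machinery is only genuinely needed once $\modulusSymbol$ is allowed to be a proper factor $r$ of $F_{n-1}$.
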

\begin{proof}
By the identity~\eqref{eqn::Identity}, higher order Fermat numbers can only become a factor of a given Rader number as $n$ increases. Multiples of these higher order Fermat numbers cannot exist as factors of Mersenne numbers $2^\ell-1$ less than the given Rader number since the only divisor of $2^n$ is two and powers of two by \eqnTag~\eqref{eqn::Binomial}, i.e. $\ell$ cannot be any number other than the divisors of $2^n$. Hence by \eqnTag~\eqref{eqn::Identity}, $2^{2^n}-1$ is the smallest multiple of $F_{n-1}$ to the base 2 so $N=2^n$.
\end{proof}
This results in a reformulation of the \ac{FNT}, which is only suitable for small transform lengths as the Fermat numbers grow large rapidly and are composite after $F_4$. Can one extend the above theorem to include the prime factors of large Fermat numbers? The answer is yes and it is the main theoretical result of this letter.
\begin{theorem}[Fermat Factor Moduli]\label{thm::FactorModuli}
The smallest power of two $\nu$ for which \eqnTag~\eqref{eqn::Rader} is true, is when the modulus $\modulusSymbol$ is a factor $r$ (prime or otherwise) of the Fermat number $F_{n-1}$.
\end{theorem}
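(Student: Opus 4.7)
The plan is to piggyback on Proposition~\ref{thm::FermatModuli} by combining two ingredients: the factorisation identity~\eqref{eqn::Identity} and the pairwise coprimality of the Fermat numbers.

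First I would establish the upper bound $\nu \leq 2^n$. If $r \mid F_{n-1}$, then by~\eqref{eqn::Identity} $F_{n-1} \mid 2^{2^n}-1$, so $r \mid 2^{2^n}-1$ and $2^{2^n} \equiv 1 \imod{r}$. Hence the smallest power of two $\nu$ satisfying $2^{\nu} \equiv 1 \imod{r}$ is at most $2^n$.

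Next I would rule out $\nu = 2^k$ for every $k < n$. Supposing for contradiction that $2^{2^k} \equiv 1 \imod{r}$ for some such $k$, a second use of~\eqref{eqn::Identity} gives $r \mid 2^{2^k}-1 = F_0 F_1 \cdots F_{k-1}$. Picking any prime $p$ dividing $r$ (which exists since $r > 1$), we obtain $p \mid F_j$ for some $j \leq k-1 < n-1$, while simultaneously $p \mid r \mid F_{n-1}$. The crux is then the classical pairwise coprimality lemma for Fermat numbers: $\gcd(F_i, F_j) = 1$ whenever $i \neq j$. This drops out of~\eqref{eqn::Identity} itself, since for $i > j$ we have $F_i - 2 = F_0 F_1 \cdots F_{i-1}$, so any common divisor of $F_i$ and $F_j$ must divide their difference $2$; as every Fermat number is odd, the gcd must be $1$. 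Applied to $F_j$ and $F_{n-1}$, this contradicts the existence of the common prime $p$, forcing $\nu = 2^n$.

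I expect the proof to be short; there is no serious obstacle. The main conceptual step is recognising that the identity~\eqref{eqn::Identity} already in the paper does double duty: once to embed $r$ as a divisor of $2^{2^n}-1$, and once (in its telescoping form $F_n - 2 = F_0 F_1 \cdots F_{n-1}$) to yield pairwise coprimality. No number-theoretic machinery beyond Proposition~\ref{thm::FermatModuli} is needed, and the upshot is that any nontrivial factor $r$ of $F_{n-1}$ — including the small primes tabulated in \tabTag~\ref{tab::Primes} — can serve as the modulus while still supporting the full transform length $N = 2^n$.
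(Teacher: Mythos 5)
Your proof is correct and follows essentially the same route as the paper: use the identity~\eqref{eqn::Identity} to show $r \mid 2^{2^n}-1$, and exclude any smaller power of two by appealing to the pairwise coprimality of the Fermat numbers. The only difference is cosmetic---you derive that coprimality from~\eqref{eqn::Identity} itself and pass to a prime divisor of $r$, whereas the paper simply cites the coprimality as well known.
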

\begin{proof}
Assume the contrary, that there exists a Mersenne number $2^\ell-1$ less than a given Rader number that is also a multiple of $r$. From \eqnTag~\eqref{eqn::Binomial}, $\ell \mid 2^n$, but the only divisors of $2^n$ is two or its powers. Now it is well known that the Fermat numbers do not share any common factor with each other, i.e. they are pair-wise coprime~\citep[pg. 63]{Schumer2004}. Hence, $\ell$ cannot be a power of two less than $2^n$ and so the first multiple of $2^\ell-1$ must be the Rader number. Thus, for $F_5 = 641\cdot 6700417$, $641$ or $6700417$ allows for $N=2^n$ when $n \leqslant 6$, since neither prime divides $2^\ell-1$ for all $\ell < 2^6$.
\end{proof}
We denote $r$ as a Rader prime when $r$ is prime. Some useful Rader primes are given in \tabTag~\ref{tab::Primes}, resulting in the transforms given in \eqnsTag~\eqref{eqn::NTT} and~\eqref{eqn::iNTT}. This is a far more useful result than \propTag~\ref{thm::FermatModuli} as now the moduli may be small or as large as desired, by simply selecting a Fermat number with a suitable small or large Rader prime. Although the factorisation of Fermat numbers is still an active area of research~\citep{Brent1996}, there are sufficient numbers of factors already known to accommodate any word size or for data hiding via large moduli.

Applying the Generalised Fermat and Mersenne Numbers to \propTag~\ref{thm::FactorModuli} should extend the \ac{NTT} of Dimitrov \emph{et al.}~\citep{Dimitrov1994}. Euler~\citep{Euler1732} showed that all prime factors of the Fermat numbers are of the form $k\cdot 2^{n+2}+1$, so \propTag~\ref{thm::FactorModuli} should also extend the work of Bhattacharya and Astola~\citep{Bhattacharya2000}. Finally, hardware implementations of this new transform may be similar to those constructed by McClellan~\citep{McClellan1976} and Leibowitz~\citep{Leibowitz1976} for the \ac{FNT}, since both require simple bit-shifting and prime factors of the Fermat numbers are of the form $k\cdot 2^{n+2}+1$~\citep{Euler1732}. See Agarwal and Burrus~\citep[\secTag VI.E]{Agarwal1974} for examples of how to compute the bit-shifting. The next section introduces an integer-only transform not requiring modulus operations.

\subsection{Modulus Free}
Carmichael~\citep{Carmichael1910} also proves the useful result
\begin{equation}\label{eqn::Dyadic}
a^{2^{\alpha-2}} \equiv 1 \imod {2^\alpha},
\end{equation}
where $a$ and 2 are coprime (see Appendix~\ref{sec::Totient}). This result can be used in constructing transforms that preserve the \ac{CCP}, while requiring no modulo operations in programming languages (such as $C$) or architectures that support ``wrap around'' upon overflow, i.e. the act of truncation is equivalent to modulo power of two (see \figTag~\ref{fig::Circles}(d)). This is advantageous because the integer division instruction, a critical part of the modulo operation, is generally a slow instruction. Note that an expression like \eqnTag~\eqref{eqn::Dyadic} is not possible using Euler's Theorem~\citep{Carmichael1910}.

Normalisation of the transform is also a concern, since the multiplicative inverse does not exist. This can be resolved by ensuring $2^\alpha$ is sufficiently large so that unnormalised values do not exceed $2^\alpha$. In other words, if $2^\beta$ is the bit depth of the data, then $\alpha \geqslant \beta + N$. 

The implementations of these transforms can be found in the NTTW C library~\citep{Chandra2009b}. Applications and performance comparisons of the various \acp{NTT}, as well as to the \ac{DFT}, will be part of a future publication.

\section*{Conclusion}\label{sec::Conclusion}
Transforms for fast digital convolutions were constructed that did not either require any multiplications or modulo operations (see \eqnsTag~\eqref{eqn::NTT},~\eqref{eqn::iNTT}, \tabTag~\ref{tab::Primes} and~\eqref{eqn::Dyadic}). The former utilises only bit-shifts, additions and modulo operations on prime factors of the Fermat numbers (denoted as Rader primes), while the latter only uses multiplications and additions. The result was made possible by using Carmichael's generalisation of Euler's Theorem, which also provides a more general theory of \aclp{NTT}.

\section*{Acknowledgements}
My thanks go to the Faculty of Science and School of Physics, Monash University and to Dr. I. Svalbe, Dr B. Parrein, Dr N. Normand and G. Ruben for their comments.

\appendices
\section{Congruences and Sino Notation}\label{sec::Sino}
Sino notation for computing the remainder or modulo operation is given as $\langle a \rangle_\modulusSymbol$, which denotes $a \!\imod \modulusSymbol$, i.e. $a = \langle a \rangle_\modulusSymbol + \modulusSymbol\cdot q$, with $a, \langle a \rangle_\modulusSymbol, q, \modulusSymbol \in \mathbb{Z}$ so that $0 \leqslant \langle a \rangle_\modulusSymbol < \modulusSymbol$. The modulo operation allows one to define a congruence, an example of which is given in \figTag~\ref{fig::Congruence}. Multiplicative inverses $b^{-1}$, i.e. the equivalent integers within congruences to do division by $b$ (thus turning division into a multiplication), are found using the efficient Extended Euclidean algorithm, provided they are coprime or their greatest common divisor is one, i.e. $\gcd(b,m)=1$.
\begin{figure}[htbp]
 \centering
 \includegraphics[width=0.25\textwidth]{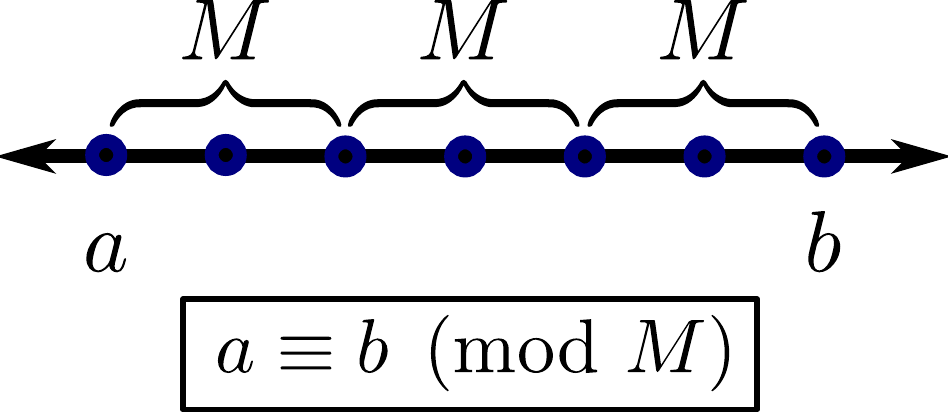} 
 \caption{Integer $a$ is congruent to integer $b$ when the distance between them is a multiple of $M$.}
 \label{fig::Congruence}
\end{figure}

\section{Totient \& Lambda Functions}\label{sec::Totient}
The Totient function $\phi(\modulusSymbol)$ is the number of integers less than $\modulusSymbol$ that do not have a common factor (or are coprime) with $\modulusSymbol$. For example, when $\modulusSymbol$ is prime, the function $\phi(\modulusSymbol) = \modulusSymbol-1$. The Lambda function $\lambda(\modulusSymbol)$ is defined in terms of the Totient function as follows
\begin{eqnarray}\label{eqn::LambdaDef}
\lambda(\modulusSymbol) =
 \begin{cases}
 \phi(\primeSymbol^{\alpha}), & \mbox{if } \modulusSymbol=\primeSymbol^{\alpha}\ \text{and}\ \primeSymbol\ \text{is an odd prime}\\ 
 \phi(2^{\alpha}), & \mbox{if } \modulusSymbol=2^{\alpha}\ \text{and}\ 0 \leqslant\alpha < 2\\
 \frac{1}{2}\phi(2^{\alpha}), & \mbox{if } \modulusSymbol=2^{\alpha}\ \text{and}\ \alpha > 2
 \end{cases}, \nonumber
\end{eqnarray}
so that $\lambda(2^{\alpha}\primeSymbol^{\alpha_1}_1\ldots \primeSymbol^{\alpha_j}_j)$ is the lowest common multiple of $\lambda(2^{\alpha}), \lambda(\primeSymbol^{\alpha_1}_1),\ldots, \lambda(\primeSymbol^{\alpha_j}_j)$ for $\modulusSymbol = 2^{\alpha}\primeSymbol^{\alpha_1}_1\ldots \primeSymbol^{\alpha_j}_j$. This allows one to define Carmichael's Theorem~\citep{Carmichael1910}
\begin{equation}\label{eqn::Carmichael}
 a^{\lambda(\modulusSymbol)} \equiv 1 \imod \modulusSymbol.
\end{equation}

\small
\bibliographystyle{IEEEtran}
\bibliography{RadonJabRef}

\acrodef{RT}{Rader Transform}
\acrodef{FT}{Fourier Transform}
\acrodef{FFT}{Fast Fourier Transform}
\acrodef{DFT}{Discrete Fourier Transform}
\acrodef{NTT}{Number Theoretic Transform}
\acrodef{CCP}{Circular Convolution Property}
\acrodef{CP}{Convolution Property}
\acrodef{FNT}{Fermat Number Transform}
\acrodef{MNT}{Mersenne Number Transform}
\acrodef{FTL}{Finite Transform Library}
\acrodef{SSE}{Streaming SIMD Extensions}
\acrodef{CPU}{Central Processing Unit}

\end{document}